\documentclass[a4paper, 11pt, oneside]{article}
\usepackage[utf8]{inputenc}
\usepackage{amsmath}
\usepackage{amssymb}
\usepackage{amsthm}
\usepackage{cite}
\usepackage{hyperref}
\usepackage{  dsfont  }

\newtheorem*{rep@theorem}{\rep@title}
\newcommand{\newreptheorem}[2]{%
\newenvironment{rep#1}[1]{%
 \def\rep@title{#2 \ref{##1}}%
 \begin{rep@theorem}}%
 {\end{rep@theorem}}}
\makeatother

\newtheorem{theorem}{Theorem}
\newreptheorem{theorem}{Theorem}
\newtheorem{lemma}{Lemma}
\newreptheorem{lemma}{Lemma}

\theoremstyle{definition}
\newtheorem{definition}{Definition}

\newcommand{\spann}[1]{\text{span}(#1)}

\title{A lower bound for the size of Kakeya sets with respect to hyperplanes in $\mathds{F}^n_q$}
\author{Beat Zurbuchen}
\date{\vspace{-4ex}}

\begin{document}
\maketitle
\begin{abstract}
We prove that a subset of $\mathds{F}_q^n$ that contains a hyperplane in any direction has size at least $q^{n}-O(q^2)$. 
\end{abstract}
\section{Introduction}
The classical Euclidean Kakeya problem is an important problem in harmonic analysis: let $E \subset \mathds{R}^n$ be a compact subset. We say $E$ is Kakeya if it contains a unit line segment in every direction. The problem claims that if $E$ is Kakeya then
\[
 \dim E = n
\]
where $\dim E$ denotes for either the Hausdorff or the Minkowski dimension (see \cite{Tao2}).  

To give a better understanding of the problem we explain the Minkowski dimension. Let $E \subset \mathds{R}^n$ and $\delta > 0$. Then we define 
$E_{\delta} = \{x \in \mathds{R}^n|\text{dist}(x, E) < \delta\}$ with $\text{dist}(x, E) = \inf\{\|x - a\|\hspace{0.125em} |a \in E\}$. 
One may think of $E_{\delta}$ as the union of all open balls with radius $r$ around every point in $E$. 

This allows us to define the Minkowski dimension for some $E \subset \mathds{R}^n$ as
\begin{equation} \label{eq:dim}
 \dim E := n - \lim_{\delta \rightarrow 0}\frac{\log \text{vol}(E_{\delta})}{\log \delta}.
\end{equation}
A good example to see how the definition works is to compute the dimension of a sphere $S \subset \mathds{R}^3$ of radius $r$. Then $S_\delta = \{x \in \mathds{R}^3|r - \delta < ||x|| < r + \delta\}$. This denotes for a set that contains the open ball with radius $r + \delta$ where 
the ball with radius $r - \delta$ has been cut out. Thus 
\begin{align*}
\text{vol}(S_{\delta}) &= \frac{4\pi}{3}((r + \delta)^3 - (r - \delta)^3) \\
&= \frac{4\pi\delta}{3}(6r^2 + 2\delta^2)
\end{align*}
which implies by (\ref{eq:dim}) that
\begin{align*}
 \dim S &= 3 - \lim_{\delta \rightarrow 0}\frac{\log\frac{4\pi}{3} + \log \delta + \log (6r^2 + \delta^2)}{\log \delta} \\
 &= 2.
\end{align*}
If $B \subset \mathds{R}^3$ is a ball with radius $r$ instead, then 
\[
\text{vol}(B_\delta) = \frac{4\pi}{3}(r + \delta)^3
\]
which means by (\ref{eq:dim}) that
\begin{align*}
 \dim B &= 3 - \lim_{\delta \rightarrow 0}\frac{\log\frac{4\pi}{3} + 3\log(r + \delta) }{\log \delta} \\
 &= 3. 
\end{align*}

It is however very hard to solve the Euclidean Kakeya problem which is why the problem is still open for $n > 2$. The case $n = 2$ was solved by Davies in
\cite{Davies71}.

In 1999, T. Wolff proposed a disrete analogue of the Kakeya problem. Let $\mathds{F}_q$ be the field with $q$ elements. We define a line to be the translate of a one-dimensional linear subspace. The direction of a given line $w'$ is the unique one-dimensional subspace $w$ such that $w'$ is a translate of $w$. A set $E \subset
\mathds{F}_q^n$ is Kakeya if $E$ contains a line in every direction, i. e. for all $v \in \mathds{F}_q^n\backslash \{0\}$ there is a $x_0 \in \mathds{F}^n_q$ such that $\{x_0 + tv | t \in \mathds{F}_q^n\} \subset E$. The problem then claims that for 
all $n$ there is a $c > 0$ (only depending on $n$) such that for all Kakeya sets $E \subset \mathds{F}_q^n$
\[
	|E| \geqslant cq^n.
\]
Z. Dvir was able to solve the problem with $c = \frac{1}{n!}$ using the so-called polynomial method.

One generalization of the Kakeya problem is the $k$-plane Furstenberg set problem in $\mathds{F}_q^n$ (see \cite{Ellb15}, Question 1.3). Let $S_k$ be the set of all subspaces $w \subset \mathds{F}_q^n$ such
that $\dim(w) = k$. We define a $k$-plane $w' \subset \mathds{F}_q^n$ to be a translated $k$-dimensional subspace. Given a $k$-plane $w'$, its direction is defined as the unique subspace $w \in \mathds{F}_q^n$ such that $w'$ is a translate of $w$. Fix some $c > 0$ and let $E \subset \mathds{F}_q^n$ be such that for every direction $w \in S_k$ there is at least one $k$-plane $w'$ with $|w'\cap E| \geqslant q^c$. The problem asks for a lower bound for $|E|$.

This problem implies the finite field Kakeya problem namely when $c = k = 1$. Note that $c = k$ means $w' \subset E$. We therefore consider this case as the generalization of the Kakeya problem. 
\begin{definition}[Kakeya set with respect to $k$-planes]
A set $E \subset \mathds{F}_q^n$ is Kakeya with respect to $k$-planes if for every $w \in S_k$ there is a $k$-plane $w' \subset \mathds{F}_q^n$ in direction $w$ such that $w' \subset E$. 
\end{definition}

We will only discuss the size of Kakeya sets w.r.t. to $(n-1)$-planes also known as hyperplanes and will therefore call a set Kakeya if it is Kakeya w.r.t. hyperplanes. From the result from Dvir one would expect that for every $n$ there is a constant $c > 0$ such that for every Kakeya set $E \subset \mathds{F}_q^n$ the inequality $|E| \geqslant cq^n$ from the original problem holds. In fact, we are able to show the following theorem.
\begin{theorem}
	Every set $E \subset \mathds{F}^n_q$ that is Kakeya fulfills
 \[
  |E| \geqslant \frac{q^{2n} - q^n}{q^n + q^2 - 2q}.
 \]
\end{theorem}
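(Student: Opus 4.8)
The plan is to run a second-moment (Cauchy--Schwarz) argument on the function that counts, for each point of $\mathds{F}_q^n$, how many members of a carefully chosen family of hyperplanes pass through it. First I would record that the number of directions $|S_{n-1}|$ equals the number of $1$-dimensional subspaces of $\mathds{F}_q^n$ (codimension-$1$ subspaces through the origin correspond to lines through the origin by duality), so $|S_{n-1}| = \frac{q^n-1}{q-1} =: N$. Using the Kakeya hypothesis, for each $w \in S_{n-1}$ fix one affine hyperplane $H_w \subseteq E$ of direction $w$; this produces hyperplanes $H_1,\dots,H_N \subseteq E$ with pairwise distinct directions.

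Next I would pin down the intersection sizes. Each $|H_i| = q^{n-1}$. For $i \neq j$ the hyperplanes $H_i$ and $H_j$ have different directions, so their defining equations have linearly independent normal covectors; the resulting $2 \times n$ linear system has rank $2$ and is consistent, hence $H_i \cap H_j$ is a nonempty affine subspace of dimension $n-2$ with $|H_i \cap H_j| = q^{n-2}$. (For the final inequality the bound $|H_i \cap H_j| \leqslant q^{n-2}$ already suffices, so the exact value is not essential.)

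Now define $f \colon \mathds{F}_q^n \to \mathds{Z}_{\geqslant 0}$ by $f(x) = \#\{\, i : x \in H_i \,\}$. Since every $H_i \subseteq E$, the support of $f$ lies in $E$. Double counting gives $\sum_x f(x) = \sum_i |H_i| = N q^{n-1}$ and, expanding $f(x)^2 = \sum_{i,j}\mathds{1}[x \in H_i \cap H_j]$, we get $\sum_x f(x)^2 = \sum_{i,j} |H_i \cap H_j| = N q^{n-1} + N(N-1) q^{n-2}$. Cauchy--Schwarz applied to $f$ on its support then yields
\[
 \bigl(N q^{n-1}\bigr)^2 = \Bigl(\sum_x f(x)\Bigr)^2 \leqslant |E| \sum_x f(x)^2 = |E|\bigl(N q^{n-1} + N(N-1) q^{n-2}\bigr),
\]
so that $|E| \geqslant \dfrac{N q^n}{N + q - 1}$. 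Substituting $N = \frac{q^n-1}{q-1}$ and simplifying (the denominator becomes $\frac{q^n + q^2 - 2q}{q-1}$) gives exactly $|E| \geqslant \frac{q^{2n}-q^n}{q^n+q^2-2q}$, as claimed.

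I do not expect a genuine obstacle. The only point requiring care is that the naive Bonferroni bound $\sum_i|H_i| - \sum_{i<j}|H_i\cap H_j|$ is hopelessly negative here (there are on the order of $q^{n-1}$ hyperplanes, each overlapping every other one in $q^{n-2}$ points), so one really must pass to the $L^2$/Cauchy--Schwarz method; once that is done, only the bookkeeping above remains.
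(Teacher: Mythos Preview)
Your proposal is correct and follows essentially the same approach as the paper: the paper defines the incidence set $I=\{(w,v):v\in H_w\}$ and applies a Cauchy--Schwarz lemma (its Lemma~2) to the projection $I\to E$, which is exactly your $L^2$ argument with $f(x)=\#\{i:x\in H_i\}$, and the computation of $|W|$ in the paper is your evaluation of $\sum_x f(x)^2$. The only cosmetic difference is that the paper packages Cauchy--Schwarz as a separate counting lemma and obtains $|S_{n-1}|$ by a direct basis count rather than duality.
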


A known proof, independent of Dvirs method, solves the problem when $n = 2$ and gives 
\[
	|E| \geqslant \frac{q(q+1)}{2}
\]
as a bound for every set $E \subset \mathds{F}_q^2$ that is Kakeya. This proof is a discrete version of the previously mentioned proof in \cite{Davies71}. We were not able to trace it back in the literature, but to read an exposition of this proof see \cite{Kaloyan}.

We modify and generalize the proof so that it gives Theorem 1. What is surprising is that while for $n = 2$ the asymptotic bound is $|E| \geqslant \frac{q^2}{2} - O(q)$, for $n > 2$ it is $|E| \geqslant q^n - O(q^2)$ rather than the expected $|E| \geqslant cq^n - O(q^{n-1})$ with some $c \in (0, 1)$. 
\section{The proof}
Before being able to prove the theorem we need to prove two lemmas. Let $S = S_{n-1}$.
\begin{lemma}
 The set $S$ fulfills 
 \[
  |S| = \frac{q^n - 1}{q - 1}.
 \]

 \end{lemma}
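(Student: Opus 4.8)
The plan is to identify $S = S_{n-1}$, the set of hyperplanes through the origin, with the set of lines through the origin via a bijection, and then count the latter directly.

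First I would count the one-dimensional subspaces of $\mathds{F}_q^n$. There are $q^n - 1$ nonzero vectors, and every one-dimensional subspace $\ell$ consists of the zero vector together with exactly $q - 1$ nonzero vectors, namely $\{tv \mid t \in \mathds{F}_q^*\}$ for any $v \in \ell \setminus \{0\}$. Since two distinct one-dimensional subspaces meet only in $0$, the nonzero vectors of $\mathds{F}_q^n$ are partitioned into classes of size $q - 1$, one class per line through the origin. Hence there are exactly $\frac{q^n - 1}{q - 1}$ such lines, which equals $|S_1|$.

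Next I would set up the bijection $S_1 \to S_{n-1}$. Consider the standard symmetric bilinear form $\langle x, y \rangle = \sum_{i=1}^{n} x_i y_i$ on $\mathds{F}_q^n$; since $\langle e_i, e_j \rangle = \delta_{ij}$, it is nondegenerate over every $\mathds{F}_q$. For a subspace $w$ put $w^\perp = \{y \in \mathds{F}_q^n \mid \langle x, y \rangle = 0 \text{ for all } x \in w\}$. Nondegeneracy gives the dimension identity $\dim w + \dim w^\perp = n$ and the involution property $(w^\perp)^\perp = w$, so $w \mapsto w^\perp$ is a bijection from $k$-dimensional subspaces onto $(n-k)$-dimensional subspaces. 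Applying this with $k = 1$ identifies $S = S_{n-1}$ with $S_1$, and together with the count of the previous paragraph this yields $|S| = \frac{q^n - 1}{q - 1}$.

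The only point needing a little care — and essentially the only content beyond bookkeeping — is the claim that $w \mapsto w^\perp$ is a bijection: one must note that the standard form remains nondegenerate in every characteristic, and recall that $\dim w^\perp = n - \dim w$ holds for a nondegenerate form even though $w \cap w^\perp$ may be nonzero in positive characteristic. An alternative that avoids the bilinear form altogether is to observe that each hyperplane through the origin is the kernel of a surjective linear functional $\mathds{F}_q^n \to \mathds{F}_q$, that two such functionals share a kernel exactly when they differ by a scalar in $\mathds{F}_q^*$, and that there are $q^n - 1$ nonzero functionals; quotienting by this free $\mathds{F}_q^*$-action again gives $\frac{q^n - 1}{q - 1}$.
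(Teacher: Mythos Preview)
Your argument is correct, but it takes a genuinely different route from the paper's. The paper counts $|S_{n-1}|$ directly: it considers the set $K$ of ordered $(n-1)$-tuples $(k_1,\dots,k_{n-1})$ of vectors in $\mathds{F}_q^n$ spanning an $(n-1)$-dimensional space, computes $|K| = \prod_{h=0}^{n-2}(q^n - q^h)$, observes that the map $(k_1,\dots,k_{n-1}) \mapsto \spann{k_1,\dots,k_{n-1}}$ from $K$ onto $S$ has fibers of constant size $\prod_{h=0}^{n-2}(q^{n-1} - q^h)$, and divides; the resulting product telescopes to $\frac{q^n-1}{q-1}$. You instead count $|S_1|$ (which is immediate) and then pass to $|S_{n-1}|$ via the duality $w \mapsto w^\perp$ or, equivalently, via the identification of hyperplanes with nonzero linear functionals up to scalars. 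Your approach is shorter and more conceptual, and the linear-functional variant avoids any worry about the bilinear form in characteristic~$2$; the paper's counting argument, on the other hand, is entirely self-contained and generalizes without change to give $|S_k|$ for arbitrary $k$ (yielding the Gaussian binomial coefficient), which the duality trick alone does not.
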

\begin{proof}
 Let
 \[
 K = \{(k_1, \dots, k_{n-1}) \in  \mathds{F}^n_q\times \cdots \times \mathds{F}^n_q| \dim{\text{span}(k_1, \dots, k_{n-1})} = n - 1\}.
 \]
We may choose any element of $\mathds{F}^n_q \backslash \{0\}$ for
 $k_1$. If $k_2 \in \spann{k_1}$ then $\spann{k_1, k_2} = \spann{k_1}$. If however $k_2 \notin \spann{k_1}$ then $\spann{k_1} \subsetneq \spann{k_1, k_2}$
 and hence $\dim \spann{k_1, k_2} = 2$. Thus $k_2 \in \mathds{F}_q^n\backslash\spann{k_1}$. One can easily generalize this argument and conclude that $k_h \in \mathds{F}^n_q\backslash \spann{k_1, \dots, k_{h-1}}.$
 Therefore
 \[
  |K| = \prod_{h = 0}^{n -2}(q^n - q^h).
 \]

Define $\phi:K \mapsto S$ to be 
 $\phi(k_1, \dots, k_{n-1}) = \spann{k_1,\dots,  k_{n-1}}$. Thus
 \[
  |K| = \sum_{w \in S} |\text{fiber}_\phi(w)|.
 \]
 We count the number of $(k_1, \dots, k_{n-1}) \in \text{fiber}_{\phi}(w)$. We may choose any element from $w\backslash \{0\}$ for $k_1$.
 Using a similar argument as above we see that $k_h \in w\backslash \spann{k_1, \dots, k_{h-1}}$ which shows that
 \[
|\text{fiber}_{\phi}(w)| = \prod_{h = 0}^{n -2}(q^{n-1} - q^h).
\]

Therefore
\begin{align*}
  |S| &= \prod_{h = 0}^{n -2}\frac{q^{n} - q^h}{q^{n-1} - q^h} \\
  &= \prod_{h = 1}^{n-1}\frac{q^{h+1} - 1}{q^{h} - 1} \\
  &= \frac{q^n - 1}{q - 1}.
\end{align*} 
 \end{proof}
 \begin{lemma}
  Let $A$ and $B$ be finite sets and let $\phi:A \rightarrow B$ be a map. If $C = \{(a_1, a_2) \in A\times A| \phi(a_1) = \phi(a_2)\}$ then
  \begin{displaymath}
   |B| \geqslant \frac{|A|^2}{|C|}.
  \end{displaymath}
 \end{lemma}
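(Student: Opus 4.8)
\section*{Proof proposal for Lemma 2}

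The plan is a direct Cauchy--Schwarz estimate on the fiber sizes of $\phi$. First I would set up notation: for each $b \in B$ write $f_b = |\text{fiber}_\phi(b)| = |\{a \in A : \phi(a) = b\}|$. Since the fibers partition $A$, I get the two identities $\sum_{b \in B} f_b = |A|$ and $\sum_{b \in B} f_b^2 = |C|$, the second one because a pair $(a_1,a_2) \in A\times A$ lies in $C$ exactly when $a_1$ and $a_2$ fall in the same fiber, so the number of such pairs with $\phi(a_1)=\phi(a_2)=b$ is $f_b^2$.

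Next I would apply the Cauchy--Schwarz inequality (equivalently, the power-mean inequality) to the vectors $(f_b)_{b\in B}$ and $(1)_{b\in B}$:
\[
 |A|^2 = \left(\sum_{b \in B} f_b\right)^2 \leqslant |B| \cdot \sum_{b \in B} f_b^2 = |B|\cdot |C|.
\]
Dividing by $|C|$ (which is nonzero as long as $A$ is nonempty; if $A = \emptyset$ the claimed inequality is trivial) yields $|B| \geqslant |A|^2/|C|$, as desired. One small point worth a remark: the sums above are effectively only over $b$ in the image of $\phi$, so the bound is in fact sharpest when $\phi$ is surjective; this does not affect the inequality, since including the extra terms $f_b = 0$ only enlarges $|B|$ on the right-hand side in the correct direction.

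There is essentially no obstacle here — the only thing to be careful about is the degenerate case $|A| = 0$ (or $|C| = 0$), which must be excluded or handled separately so that the division is legitimate, and making sure the identity $|C| = \sum_b f_b^2$ is stated cleanly. This lemma will presumably be applied with $A = K$ (or a variant), $B = S$, and $\phi$ the span map from Lemma 1, so that an upper bound on the number of ``collision pairs'' $C$ translates into the desired lower bound on $|E|$ via $|S|$.
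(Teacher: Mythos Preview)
Your proof is correct and matches the paper's argument essentially verbatim: both compute $|C| = \sum_{b\in B} |\text{fiber}_\phi(b)|^2$ and then apply Cauchy--Schwarz to the vectors $(|\text{fiber}_\phi(b)|)_{b\in B}$ and $(1)_{b\in B}$. The only discrepancy is your closing guess about the application---in the paper the lemma is used with $A = I$, $B = E$, and $\phi = \gamma$ (the projection $(w,v)\mapsto v$), not with the span map from Lemma~1.
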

\begin{proof}
 Let $b \in B$ and define $C_b = \{(a_1, a_2) \in C|\phi(a_1) = b\}$. Note that 
 \[
  |C| = \sum_{b \in B} |C_b|.
 \]
For a fixed $b$ one may choose any $a_1 \in \text{fiber}_\phi(b)$ and any $a_2 \in \text{fiber}_\phi(b)$. Thus $|C_b| = |\text{fiber}_\phi(b)|^2$. By applying the Cauchy-Schwarz
inequality we obtain
\[
  \Big(\sum_{b \in B}|\text{fiber}_\phi(b)| \cdot 1\Big)^2 \leqslant \sum_{b \in B}|\text{fiber}_\phi(b)|^2 \sum_{b \in B}1
\]
which is by the previous observations equivalent to the lemma. 
\end{proof}
\begin{proof}[Proof of Theorem 1]
For every $w \in S$ take a $v_w \in \mathds{F}_q^n$ such that $w' = v_w + w$ is contained in $E$. Let
\[ 
 I = \{(w, v) \in S\times E|v \in w'\}.
\] 
There are $|S|$ possibilities for $w$ while every $w$ contributes $q^{n-1}$ points since $|w'| = q^{n-1}$. Thus $|I| = |S|q^{n-1}$. 

Let $\gamma: I \rightarrow E$ be $\gamma(w, v) = v$ and define 
\[
 W = \{(w_1, w_2, v) \in S\times S\times E|v \in w_1'\cap w_2'\}.
\]
It is easy to see that there is a bijection between $W$ and $K = \{((w_1, v_1), (w_2, v_2)) \in I \times I|v_1 = v_2\}$ given by $(w_1, w_2, v) \mapsto ((w_1, v), (w_2, v))$. This implies by Lemma 2 that
\begin{equation} \label{eq:final}
 |E| \geqslant \frac{|I|^2}{|W|}.
\end{equation}
The last step before being able to obtain the final inequality is to compute $|W|$. To do so, we differ two cases:
\begin{enumerate}
 \item $w_1 = w_2$: This case contributes $|I|$ elements since for every $(w_1, v) \in I$ there is a $(w_1, w_1, v) \in W$ and vice versa. 
 \item $w_1 \neq w_2$: Note first that by assumption $w_1 \nparallel w_2$. By the dimension formula for subscpaces $w_1' \cap w_2'$ has cardinality $q^{n-2}$. There  are
 $|S|(|S| - 1)$ ordered pairs of not parallel hyperplanes. Hence this case contributes $|S|(|S| - 1)q^{n-2}$ elements to $W$.
\end{enumerate}
This leaves us with
\[
 |W| = |I| + |S|(|S| - 1)q^{n-2}.
\]
By using (\ref{eq:final}) and Lemma 1 we deduce that
\begin{align*}
 |E| &\geqslant \frac{|S|^2q^{2n-2}}{(|S|^2 - |S| + |S|q)q^{n-2}} \\
 &= \frac{q^{2n} - q^n}{q^n + q^2 - 2q}
\end{align*}
\end{proof}
Note how our final bound for $|E|$ has the form $|E| \geqslant q^n - O(q^2)$ for $n > 2$  and $|E| \geqslant \frac{q^2}{2} - O(q)$ for $n = 2$. Namely, when $n > 2$ the leading term in the denominator is $q^n$ while it is $2q^2$ when $n = 2$.

\section{Acknowledgements}
I want to thank SwissMAP for providing me with the opportunity to participate in this program. I also want to thank Kaloyan Slavov for his support throughout the whole project and his proposal to do this research project.
\bibliography{literature}
\bibliographystyle{alpha}
\end{document}